\newtheorem{theorem}{Theorem}[section]
 \newtheorem{corollary}[theorem]{Corollary}
 \newtheorem{lemma}[theorem]{Lemma}
 \theoremstyle{definition}
 \theoremstyle{definition}
 \theoremstyle{remark}
 \newcommand{\eps}{\varepsilon}
 \newcommand{\R}{\mathbb{R}}
 \newcommand{\abs}[1]{\left\vert#1\right\vert}
 \newcommand{\inner}[1]{\left(#1\right)}
 \newcommand{\reff}[1]{(\ref{#1})}
\begin{document}

\title[Particle paths in small rotational solitary waves]{Particle paths in small amplitude solitary waves with negative vorticity}

\author[L.-J. Wang]{Ling-Jun Wang}

\address{
College of Science, Wuhan University of Science and Technology, Wuhan 430081, China
}

\email{wanglingjun@wust.edu.cn}

\begin{abstract}
We
investigate the particle trajectories in solitary waves with vorticity,
where the vorticity is assumed to be negative and
decrease with depth. We show that the individual
 particle moves in a similar way as that in the irrotational case if
 the underlying laminar flow is favorable, that is, the flow is moving in the same direction as the wave propagation throughout the fluid,  and show that if the underlying current is not favorable, some particles in
 a sufficiently small solitary wave  move to the opposite direction of wave propagation along a path
 with a
 single loop or hump.
\end{abstract}

\maketitle

\section{Introduction}
The \textit{water-wave problem} concerns the gravity-driven flow of a
perfect fluid of unit density; the flow is bounded below by a rigid
horizontal bottom $\{Y=-d\}$ and above by a free surface
$\{Y=\eta(X,t)\}$, where $\eta$ depends upon the horizontal spatial
coordinate $X$ and time $t$. \textit{Steady waves} are waves which
propagate from left to right with constant speed $c$ and without
change of shape, so that $\eta(X,t)=\eta(X-ct)$.  \textit{Solitary
  waves} are steady waves which have the property that
$\eta(X-ct)\rightarrow 0$ as $X-ct\rightarrow \pm\infty$. We consider
in this paper the particle trajectories in a fluid as a solitary wave propagates on the free surface, assuming
that the flow admits a  negative
vorticity decreasing with depth.

There have been a series of works concentrating on the study of
solitary waves, in the setting of both irrotational flows
\cite{AT,BBB,CraigS, CE,CEH} etc.,
and  rotational flows which  become active only in the last few years.
One of the interests in the above works is the description of
individual
particle path in the fluid.  In \textit{irrotational} flow, particle
paths underneath a solitary wave are investigated
in \cite{Cqam,CE}
, both for the smooth solitary wave and the solitary wave of greatest
height for which the crest is a stagnation point. It was shown in \cite{CE} that in an irrotational solitary water wave, each particle is
transported in the wave direction but slower than the wave speed; as the solitary wave propagates,
all particles located ahead of the wave crest are lifted while
those behind have a downward motion, with the particle trajectory having asymptotically the same height above the flat bed. For \textit{rotational }flow, some results on particle paths under \textit{periodic} waves are obtained (cf. \cite{Ejde, Enonlinearity, Delia,WaJDE} for instance), following a series of works on the corresponding results in irrotational case (see \cite{Cinvention,CW2,HenryImrn} etc).

Recently, rigorous existence results on small-amplitude solitary
waves with arbitrary vorticity distribution were obtained in \cite{HurSolitary} and in
\cite{GW}, using generalized implicit function theorem of
Nash-Moser type and spatial dynamics method, respectively. The solitary waves established in \cite{HurSolitary,GW} are of elevation and decays exponentially to a horizontal laminar flow far up- and downstream. The study
of solitary waves of large-amplitude with an arbitrary distribution of vorticity remains open. So this arises the question of particle paths in a rotational small-amplitude solitary wave.
Following the pattern in \cite{CE} for irrotational case,  we
prove in this work the
corresponding results on
particle paths in rotational solitary waves  by using  the properties of solutions obtained in \cite{HurSolitary}. We consider in this work only solitary waves with negative vorticity, however with modifications our arguments can be applied also for positive vorticity.

Precisely we show that if the
vorticity is negative and increasing from bottom to surface of the
flow, and if the underlying current is favorable, i.e., is moving throughout the fluid in
the same direction as the wave,
then as time goes on the particle moves similarly as in the
irrotational case \cite{CE}.  We also show that  if the underlying current is not
favorable, then in solitary waves with
sufficiently small amplitude,  some particles above the flat bed move
to the opposite direction of wave propagation along a path with a single loop or a single hump.  Note that this single-loop kind of
path does not exist in the irrotational case.   In \cite{HurSolitary}, as in most of
the works on waves with vorticity,   the author
considered only waves that are not near breaking or
stagnation, i.e., the speed of an individual fluid particle is far
less than that of the wave itself throughout the fluid domain. We
consider only such waves as well. We do not consider the case of wave
with stagnation which is however studied  in \cite{CE} in the
irrotational case.

The paper is organized as follows. In Section \ref{sec p} we present
the mathematical formulation for the solitary waves and recall from
\cite{HurSolitary} some useful properties of the established solitary wave solutions.
Section \ref{sec v} contains some conclusions on the vertical and horizontal velocity that are relevant for
our purposes. The main result is presented and proved in Section
\ref{sec r}. In the final section we give two examples which lie in our settings.

\section{Preliminaries}\label{sec p}
We first describe the governing equations for rotational solitary water waves and then recall some properties available on their solitary wave solutions.
\subsection{The governing equations for rotational solitary water waves.}
Choose Cartesian coordinates $(X,Y)$ so that the horizontal $X$-axis is in the direction of wave propagation and the $Y$-axis points upwards. Consider steady waves traveling at constant speed $c>0$. In the frame of reference moving with the wave, which is equivalent to the change of variables $(X-ct,Y)\mapsto (x,y)$, we use
\[
  \Omega_{\eta}=\{(x,y)\in \R^2: -d<y<\eta(x)\},\quad 0 < d <\infty,
\]
 to denote the stationary fluid domain and
$(u(x,y),v(x,y))$ to denote denote the velocity field, and define the \textit{stream function} $\psi(x,y)$ by $\psi(x,\eta(x))=0$ and
\begin{eqnarray}\label{def psi}
  \psi_y=u-c,\quad \psi_x=-v.
\end{eqnarray}
Consider also only waves that are not near breaking or stagnation, so that $\psi_y(x,y)\leq -\delta<0$ in  $\bar{\Omega}_{\eta}$ for some $\delta>0$, which  implies that the vorticity $\omega=v_x-u_y$ is globally a function of the stream function $\psi$, denoted by $\gamma(\psi)$; see \cite{CW}.
The solitary-wave problem is then, for given $p_0<0$ and $\gamma\in C^1([0,-p_0]; \R)$, to find a real parameter $\lambda$, a domain $\Omega_{\eta}$ and a function $\psi\in C^2(\bar{\Omega}_\eta)$ such that
\begin{eqnarray}
  \psi_y<0,\quad (x,y)\in \bar{\Omega}_{\eta},\label{psi y negative}\\
  \triangle\psi=-\gamma(\psi),\quad (x,y)\in \Omega_{\eta},\label{equ psi}\\
  \abs{\nabla \psi}^2+2gy=\lambda,\quad y=\eta(x),\label{Bernouli}\\
  \psi=0,\quad y=\eta(x),\label{psi eta}\\
  \psi=-p_0, \quad y=-d,\label{psi -d}\\
  \eta(x)\rightarrow 0 \quad {\rm as} \abs{x}\rightarrow \infty,\label{eta to 0}\\
    \psi_x(x,y)\rightarrow 0 \quad {\rm as~~} \abs{x}\rightarrow \infty ~~{\rm uniformly ~~ for~~} y.\label{psi x to 0}
\end{eqnarray}
Here $g>0$ is the gravitational constant of acceleration,
\begin{eqnarray*}
  p_0=\int_{-d}^{\eta(x)}\psi_y(x,y)~dy<0
\end{eqnarray*}
is the relative mass flux (independent of $x$), and the boundary conditions \reff{eta to 0} and \reff{psi x to 0} express that the wave profile approaches a constant level of depth
and the flow is almost horizontal in the far field, respectively.
Moreover we require that the nontrivial solitary wave is of positive elevation, i.e., $\eta(x)>0$ for all $x\in\R$. It is therefore symmetric about its single crest and admits a strictly monotone wave profile on either side of this crest (see \cite {HurSymmetry}). Assuming the wave crest is located at $x=0$, the solitary-wave problem \reff{psi y negative}-\reff{psi x to 0} is thus supplemented with the symmetry and monotonicity conditions
\begin{eqnarray}\label{symmetry}
  \psi(-x,y)=\psi(x,y),\quad \eta(-x)=\eta(x),\quad {\rm and}~~ \eta'(x)<0~~{\rm for ~~}x>0.
\end{eqnarray}
We refer to \cite{HurSolitary, CW} for more details on the derivation of the system \reff{psi y negative}-\reff{psi x to 0}.

\subsection{Rotational solitary water waves.} We collect some properties of the solitary wave solutions established in \cite{HurSolitary}. Let
\begin{eqnarray}\label{def Gamma}
\Gamma(p)=\int_{0}^{p}\gamma(-s)ds \quad {\rm and} ~~\Gamma_{\min}=\min_{p\in[p_0,0]}\Gamma(p)\leq 0.
\end{eqnarray}
Given $p_0<0$ and $\gamma\in C^1([0,-p_0]; \R)$, for each $\lambda\in(-2\Gamma_{\min},\infty)$ the system \reff{psi y negative}-\reff{psi x to 0} admits a trivial solution  pair $(\eta(x),\Psi(y))$ defined on $\bar\Omega_0$, where  $\eta(x)\equiv0$, the stream function $\Psi(y)$ is $x$-independent and is the inverse of the function
\begin{eqnarray*}
  y(\Psi)=\int_{p_0}^{-\Psi}\frac{d p}{\sqrt{\lambda+2\Gamma(p)}}-d,
\end{eqnarray*}
and the fluid domain
\begin{eqnarray*}
  \Omega_0=\{(x,y)\in\R^2: -d<y<0\},\quad d=\int_{p_0}^{0}\frac{d p}{\sqrt{\lambda+2\Gamma(p)}}.
\end{eqnarray*}
The corresponding relative horizontal velocity and vertical velocity are thus given by
\begin{eqnarray}\label{U V}
  U(y)-c=\Psi_y(y)=-\sqrt{\lambda+2\Gamma(-\Psi(y))},\quad V(x,y)=-\Psi_x(y)\equiv 0.
\end{eqnarray}
Note that $\Psi(0)=0$ and $\Psi(-d)=-p_0$. Thus
\begin{eqnarray*}
  U(0)=c-\sqrt{\lambda}\quad {\rm and}\quad U(-d)=c-\sqrt{\lambda+2\Gamma(p_0)}.
\end{eqnarray*}
Throughout this paper, we adopt the terminology in \cite{CW2} to say this underlying trivial flow is \textit{favorable} if $U(y)\geq 0$ for all $y\in[-d,0]$, is \textit{adverse} if $U(y)<0$ for all $y\in[-d,0]$, and is \textit{mixed} if $U(y)$ changes sign. Note that favorable flow moves in the same direction as the wave propagation (i.e., to the right),  while adverse flow moves in the opposite direction of the wave propagation.

To ensure the existence of nontrivial small amplitude solitary waves, the parameter $\lambda$ must be chosen to satisfy $\lambda>\lambda_c$ but close to $\lambda_c$, where $\lambda_c>-2\Gamma_{\min}$ is the unique solution of
\begin{eqnarray}\label{def lambda c}
  \int_{p_0}^0\frac{dp}{\inner{\lambda_c+2\Gamma(p)}^{3/2}}=\frac{1}{g}.
\end{eqnarray}
For each such a given $\lambda$ and for given $p_0<0$, it was shown in \cite{HurSolitary}  that there exists a nontrivial small amplitude solitary-wave solution pair $(\eta(x),\psi(x,y))$ to \reff{psi y negative}-\reff{psi x to 0} defined on $\bar\Omega_\eta$, with $\eta(x)$ satisfying
\begin{eqnarray}\label{eta small}
  \abs{\eta(x)}+\abs{\eta'(x)}+\abs{\eta''(x)}\leq (\lambda-\lambda_c)r \quad {\rm for ~~all~~} x\in\R
\end{eqnarray}
and for some constant $r>0$ independent of $\lambda$, and the corresponding  horizontal velocity satisfying the following properties:
\begin{enumerate}[~~~~(P1)]
  \item  \label{u to 0} $u(x,\eta(x))\rightarrow c-\sqrt{\lambda}=U(0)$,  \quad {\rm as~~}$\abs{x}\rightarrow \infty $;
  \item \label{u to U} $u(x,y)\rightarrow U(y)$ \quad {\rm as~~}$\abs{x}\rightarrow \infty$ ~~{\rm uniformly~~for~~} $y$.
\end{enumerate}
The first property is obvious since by \reff{Bernouli}, \reff{eta to 0} and \reff{psi x to 0}, we have $(u(x,\eta(x))-c)^2=\psi^2_y(x,\eta(x))\rightarrow \lambda$ as $\abs{x}\rightarrow \infty$, which equivalently gives (P\ref{u to 0})
as we have assumed $u(x,y)-c=\psi_y(x,y)<0$ in $\bar\Omega_\eta$, while the
property (P\ref{u to U}) can be deduced from the construction of solitary wave solutions; see \cite{HurSolitary}. Indeed for $\lambda=\lambda_c+\eps$ with $\eps>0$, there exists a function $w^{\eps}(q,p)$, whose derivatives with respect to $(q,p)$ up to order 2 tend to 0 uniformly for $p$ as $\abs{q}\rightarrow \infty$, such that the horizontal velocity is determined by
\begin{eqnarray*}
  u(x,y)=c-\frac{1}{\inner{\lambda+2\Gamma(-\psi(x,y)}^{-1/2}+\eps w^{\eps}_p(\sqrt{\eps}x,-\psi(x,y))},
\end{eqnarray*}
where $w^{\eps}_p(q,p)$ denotes differentiation in
the $p$-variable.

 We conclude this section by recalling some properties of streamlines. Due to $\psi_y<0$ throughout $\bar\Omega_\eta$, we have that for all $p\in[-p_0,0]$ the streamline
\[
\{(x,y): \psi(x,y)=-p\}
\]
 is  a smooth curve $y=\sigma_p(x)$. Note that
 \begin{eqnarray}\label{sigma}
 \sigma_0(x)=\eta(x),\quad \sigma_{p_0}(x)=-d\quad {\rm and ~~}\sigma_p'(x)=-\frac{\psi_x(x,\sigma_p(x))}{\psi_y(x,\sigma_p(x))}.
 \end{eqnarray}
 Observing the fact that $\frac{1}{\abs{\psi_y}}=\frac{1}{c-u}$ is bounded and that $\psi_x(x,y)\rightarrow 0$ uniformly for $y$ as $\abs{x}\rightarrow \infty$, we deduce that for each fixed $y_0\in[0,\eta(0)]$ the streamline $y=\sigma_p(x)$ with $p=-\psi(0,y_0)$, passing through the point $(0,y_0)$, has an asymptote $y=l(y_0)$ as $\abs{x}\rightarrow\infty$, with $l(\eta(0))=0$ and $l(-d)=-d$.

\section{Vertical and horizontal velocity}\label{sec v}
We first divide the fluid domain $\Omega_\eta$ into two components
\[
 \Omega_-=\{(x,y)\in\R^2: x<0, 0<y<\eta(x)\} \quad {\rm and~~~} \Omega_+=\{(x,y)\in\R^2: x>0, 0<y<\eta(x)\},
\]
and denote their boundaries by
\[
 S_-=\{(x,y)\in\R^2: x<0, y=\eta(x)\}, \quad B_-=\{(x,y)\in\R^2: x<0, y=-d\},
\]
respectively
\[
 S_+=\{(x,y)\in\R^2: x>0, y=\eta(x)\}, \quad B_+=\{(x,y)\in\R^2: x>0, y=-d\}.
\]

\begin{lemma}\label{lem vertical}
  Suppose that $\gamma'(p)\leq 0$ for all $p\in[0,\abs{p_0}]$. Then
  \begin{enumerate}[(a)]
    \item \label{v=0} $v(x,-d)=0$ for all $x\in\R$, and $v(0,y)=0$ for $y\in [-d,\eta(0)]$ ;
    \item\label{lem vertical 2} $v(x,y)<0$ if $(x,y)\in \Omega_-\cup S_-$, and $v(x,y)>0$ if $(x,y)\in \Omega_+\cup S_+$;
    \item\label{lem vertical 3} $v_y(x,-d)<0$ if $x<0$, and $v_y(x,-d)>0$ if $x>0$;
    \item \label{lem vertical 4} $v_x(0,y)>0$ for $y\in(-d,\eta(0))$.
  \end{enumerate}
\end{lemma}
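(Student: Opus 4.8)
The plan is to reduce everything to the single auxiliary function $w:=\psi_x$, observing that $v=-\psi_x=-w$ throughout $\bar\Omega_\eta$, and to exploit that $w$ solves a linear elliptic equation whose zeroth-order coefficient is sign-definite under the hypothesis. Part \reff{v=0} would come for free from the boundary and symmetry data: along the bed $\psi(x,-d)\equiv-p_0$ is constant in $x$, so $\psi_x(x,-d)=0$ and hence $v(x,-d)=0$; and since the symmetry relation $\psi(-x,y)=\psi(x,y)$ makes $\psi_x$ odd in $x$, evaluating at $x=0$ gives $\psi_x(0,y)=0$, so $v(0,y)=0$ for $y\in[-d,\eta(0)]$.

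For the sign statements I would first differentiate the interior equation $\triangle\psi=-\gamma(\psi)$ in $x$ to get
\[
 \triangle w+\gamma'(\psi)\,w=0 \quad {\rm in~}\Omega_\eta.
\]
Because $\psi_y<0$ forces $\psi$ to range over $[0,-p_0]$ throughout the fluid, the hypothesis $\gamma'(p)\le 0$ on $[0,-p_0]$ yields $c(x,y):=\gamma'(\psi(x,y))\le 0$ everywhere, which is exactly what the maximum principle for the operator $\triangle+c$ needs. I then record the sign of $w$ on the boundary of the right half-domain $D^{+}:=\{x>0,\ -d<y<\eta(x)\}$: on the bed $B_+$ and on the left edge $\{x=0\}$ one has $w=0$ as in part \reff{v=0}; on $S_+$, differentiating $\psi(x,\eta(x))=0$ gives $\psi_x=-\psi_y\,\eta'$, which is strictly negative for $x>0$ since $\psi_y<0$ and $\eta'(x)<0$; and $w\to0$ as $x\to+\infty$ by \reff{psi x to 0}.

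With $c\le0$, the weak maximum principle would give $w\le0$ in $D^{+}$ (the boundary value of $w$ being nonpositive), and the strong maximum principle would upgrade this: if $w$ vanished at an interior point it would attain its nonnegative maximum $0$ inside, forcing $w\equiv0$ and contradicting $w<0$ on $S_+$, so $w<0$ in the interior. This proves $v=-w>0$ on $\Omega_+\cup S_+$, and the oddness of $w$ then gives $v<0$ on $\Omega_-\cup S_-$, establishing \reff{lem vertical 2}. For \reff{lem vertical 3} and \reff{lem vertical 4} I would invoke Hopf's boundary-point lemma for the same operator (again using $c\le0$ and that the boundary maximum $0$ is nonnegative): at a bed point $(x,-d)$ with $x>0$ the outward normal is $(0,-1)$, so Hopf gives $-w_y(x,-d)>0$, i.e. $v_y(x,-d)=-\psi_{xy}(x,-d)=-w_y>0$; and at a left-edge point $(0,y)$ with $y\in(-d,\eta(0))$ the outward normal is $(-1,0)$, so Hopf gives $-w_x(0,y)>0$, i.e. $v_x(0,y)=-\psi_{xx}(0,y)=-w_x>0$. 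The remaining sign in \reff{lem vertical 3} for $x<0$ follows from the oddness of $w$ in $x$.

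The main obstacle I anticipate is justifying the maximum principle and Hopf's lemma on the \emph{unbounded} domain $D^{+}$, where the textbook statements for bounded domains do not apply verbatim. I would handle this by truncating at $x=R$, controlling the extra boundary contribution on $\{x=R\}$ by $\sup_{x\ge R}\abs{\psi_x}\to0$ from \reff{psi x to 0}, and letting $R\to\infty$ (a Phragmén--Lindel\"of-type argument powered by the decay of $w$). A secondary point to verify is the interior-ball condition required by Hopf's lemma: it holds at every bed point and at every left-edge point with $y$ in the \emph{open} interval $(-d,\eta(0))$, i.e. away from the two corners, which is precisely the range asserted in \reff{lem vertical 3}--\reff{lem vertical 4}; and since $\psi\in C^2(\bar\Omega_\eta)$, the function $w=\psi_x$ is $C^1$, so the normal derivatives appearing in Hopf's lemma are classical.
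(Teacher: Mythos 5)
Your proposal is correct and follows essentially the same route as the paper: the boundary/symmetry identities for part (a), the equation $\triangle v+\gamma'(\psi)v=0$ with the maximum principle on truncated half-domains (using the decay of $\psi_x$ at infinity) for part (b), and Hopf's boundary-point lemma on the bed and the crest line for parts (c)--(d). Your version is only slightly more explicit than the paper's about the Phragm\'en--Lindel\"of truncation and the interior-ball hypothesis, but the substance is identical.
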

\begin{proof}
  Since $v=-\psi_x$, the first result follows from \reff{psi -d} and the symmetry property $\psi(-x,y)=\psi(x,y)$ in \reff{symmetry}.

  Next we only prove the lemma for $x>0$ and the results for $x<0$ can be proved similarly. Differentiating \reff{psi eta} with respect to $x$ gives $v=-\psi_x=\psi_y\eta'(x)$, from which follows $v>0$ for $(x,y)\in S_+$ as $\psi_y<0$ and $\eta'(x)<0$ for $x>0$ in view of \reff{psi y negative} and \reff{symmetry}. To prove $v>0$ in $\Omega_+$, we assume first on the contrary that there exists a point $(x_0,y_0)\in\Omega_+$ such that $v(x_0,y_0)=-\eps<0$. Then we can find a bounded domain
  \[
    \Omega_{+,k}=\{(x,y)\in\R: 0<x<k,-d<y<\eta(x)\},\quad k\in\R^+,
  \]
  such that $(x_0,y_0)\in\Omega_{+,k}$. Moreover in view of \reff{psi x to 0} we can choose $k$ sufficiently large so that $v(k,y)> -\eps$ for $y\in[-d,\eta(k)]$. This means that $v$ attains its minimum  at the interior point $(x_0,y_0)$ of $ \Omega_{+,k}$, which contradicts to the strong maximum principle applied to $v$ on the domain $\bar\Omega_{+,k}$, as $v$ satisfies
  $\triangle v+\gamma'(\psi)v=0$ with $\gamma'(p)\leq 0$ by differentiating \reff{equ psi}. Therefore we have $v\geq 0$ in $\Omega_+$. If $v=0$ at a point $(x_0,y_0)$ of $\Omega_+$. Again we can choose a bounded domain $\Omega_{+,\hat k}$ containing $(x_0,y_0)$. Then $v\geq0$ on the boundary of $\Omega_{+,\hat k}$. We can thus apply the strong maximum principle on $\Omega_{+,\hat k}$ once more to conclude that $v\equiv0$ on $\bar\Omega_{+,\hat k}$, which contradicts $v>0$ on the half surface $S_+$. This proves $v>0$ in $\Omega_+$.

   Since $v$ attains its minimum in $\bar\Omega_+$ on the half bed $B_+$ and on the crest line $x=0$,  Hopf's maximum principle ensures that $v_y>0$ on $B_+$ and $v_x>0$ on $\{(0,y); -d<y<\eta(0)\}$, completing the proof.
\end{proof}

The above lemma combined with \reff{sigma} and the fact that $\psi_y<0$ gives

\begin{corollary}\label{cor streamline}
  The streamline $y=\sigma_p(x)$ with $p\in(p_0,0]$ satisfies that $\sigma_p'(x)>0$ for $x<0$ and $\sigma_p'(x)<0$ for $x>0$.
\end{corollary}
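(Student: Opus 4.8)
The statement is an immediate consequence of the slope formula in \reff{sigma} together with the sign information for $v$ furnished by Lemma~\ref{lem vertical}. The plan is first to substitute $v=-\psi_x$ into the last identity of \reff{sigma}, so that for $p\in(p_0,0]$
\begin{eqnarray*}
  \sigma_p'(x)=-\frac{\psi_x(x,\sigma_p(x))}{\psi_y(x,\sigma_p(x))}=\frac{v(x,\sigma_p(x))}{\psi_y(x,\sigma_p(x))},
\end{eqnarray*}
and then to read off the sign of this quotient factor by factor. By \reff{psi y negative} the denominator $\psi_y$ is strictly negative at every point of $\bar\Omega_\eta$, so the sign of $\sigma_p'(x)$ is exactly opposite to the sign of $v(x,\sigma_p(x))$, and the whole problem reduces to locating the sign of $v$ along the streamline.

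For $p\in(p_0,0]$ the curve $y=\sigma_p(x)$ lies below the surface and above the bed, i.e. $-d<\sigma_p(x)\leq\eta(x)$, with equality only in the case $p=0$ where $\sigma_0=\eta$. Hence for $x>0$ the point $(x,\sigma_p(x))$ lies in the right half of the fluid (on $S_+$ when $p=0$ and in the interior otherwise), and part (b) of Lemma~\ref{lem vertical} gives $v(x,\sigma_p(x))>0$, whence $\sigma_p'(x)<0$; symmetrically, for $x<0$ one has $v(x,\sigma_p(x))<0$ and therefore $\sigma_p'(x)>0$. This is precisely the asserted monotonicity.

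The only point that needs a little care, and the step I expect to be the main (though minor) obstacle, is that an interior streamline can descend below the level $y=0$ far from the crest, whereas part (b) of Lemma~\ref{lem vertical} is phrased for the subdomains $\Omega_\pm$ sitting above $y=0$. To dispose of this I would observe that the strong-maximum-principle argument proving $v>0$ in the lemma was in fact carried out on the full-depth box $\Omega_{+,k}=\{0<x<k,\,-d<y<\eta(x)\}$, so it actually yields $v(x,y)>0$ at \emph{every} point with $x>0$ and $-d<y<\eta(x)$, not merely those above $y=0$. Differentiating the symmetry relation $\psi(-x,y)=\psi(x,y)$ in \reff{symmetry} gives the oddness $v(-x,y)=-v(x,y)$, which then delivers $v<0$ throughout the left half $\{x<0,\,-d<y<\eta(x)\}$. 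With the sign of $v$ available at every point of the streamline, the computation of the first two paragraphs goes through unchanged, completing the proof.
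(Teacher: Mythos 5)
Your proposal is correct and follows exactly the route the paper intends: the paper offers no separate proof beyond the remark that the corollary follows from Lemma~\ref{lem vertical}, the slope formula in \reff{sigma}, and $\psi_y<0$, which is precisely your computation $\sigma_p'=v/\psi_y$ combined with the sign of $v$. Your extra paragraph noting that $\Omega_\pm$ are written with lower boundary $y=0$ but that the maximum-principle argument in the lemma actually runs on the full-depth region $-d<y<\eta(x)$ correctly resolves what is evidently a typo in the paper's definition of $\Omega_\pm$, so nothing is missing.
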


\begin{lemma}\label{lem horizontal}
  Suppose that $\gamma(p),\gamma'(p)\leq0$ for all $p\in [0,\abs{p_0}]$, that $0<\lambda-\lambda_c<\eps$ with $\eps$ small such that nontrivial solitary wave exists, and that $c\geq\sqrt{\lambda+2\Gamma(p_0)}$. Then $u(x,y)>0$ for $x\in \bar\Omega_\eta$.
\end{lemma}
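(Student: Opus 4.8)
The plan is to recognize that the hypothesis $c\geq\sqrt{\lambda+2\Gamma(p_0)}$ is exactly the statement that the underlying laminar flow is favorable, and then to transport the nonnegativity of $U$ to the full wave by a maximum principle for $\psi_y=u-c$. First I would record two consequences of $\gamma\leq 0$. By \reff{def Gamma} one has $\Gamma(p_0)=-\int_0^{\abs{p_0}}\gamma\geq 0$, and $\Gamma'(p)=\gamma(-p)\leq 0$ so $\Gamma$ is nonincreasing on $[p_0,0]$; hence in \reff{U V} the trivial velocity $U(y)=c-\sqrt{\lambda+2\Gamma(-\Psi(y))}$ attains its minimum over $[-d,0]$ at the bed, giving $U(y)\geq U(-d)=c-\sqrt{\lambda+2\Gamma(p_0)}\geq 0$ for all $y$. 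It therefore suffices to prove the stronger bound $u\geq U(-d)$ on $\bar\Omega_\eta$.

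The engine is that, differentiating \reff{equ psi} in $y$, the function $\psi_y=u-c$ solves the same homogeneous equation as $v$ in Lemma~\ref{lem vertical}, namely $\triangle\psi_y+\gamma'(\psi)\psi_y=0$, whose zeroth-order coefficient $\gamma'(\psi)$ is $\leq 0$. Setting $k:=U(-d)-c=-\sqrt{\lambda+2\Gamma(p_0)}\leq 0$ and $\phi:=u-U(-d)=\psi_y-k$, a one-line computation gives $\triangle\phi+\gamma'(\psi)\phi=-\gamma'(\psi)k\leq 0$, so $\phi$ is a supersolution of an operator with nonpositive zeroth-order term. I would then run the maximum principle exactly as in the proof of Lemma~\ref{lem vertical}, on bounded truncations $\{\abs{x}<k,\ -d<y<\eta(x)\}$ and letting the truncation exhaust $\Omega_\eta$, to conclude $\phi\geq 0$ in $\Omega_\eta$ provided $\phi\geq 0$ on the boundary and in the far field.

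The three boundary estimates are the heart of the matter. On the lateral far field, property (P\ref{u to U}) gives $u\to U(y)\geq U(-d)$ uniformly, so $\phi\to U(y)-U(-d)\geq 0$ as $\abs{x}\to\infty$. On the free surface I would use Bernoulli: differentiating \reff{psi eta} gives $\psi_x=-\psi_y\eta'$, so $\abs{\nabla\psi}^2=\psi_y^2(1+\eta'^2)$, and \reff{Bernouli} yields $\psi_y^2=(\lambda-2g\eta)/(1+\eta'^2)<\lambda$ since $\eta>0$; hence $u=c+\psi_y>c-\sqrt{\lambda}\geq c-\sqrt{\lambda+2\Gamma(p_0)}=U(-d)$, using $\Gamma(p_0)\geq 0$. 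The remaining and least obvious case is the flat bed, where neither Bernoulli nor a far-field limit applies pointwise. There I would not estimate $u(x,-d)$ directly but exploit symmetry together with Lemma~\ref{lem vertical}(c): since $\frac{d}{dx}u(x,-d)=\psi_{xy}(x,-d)=-v_y(x,-d)$ is positive for $x<0$ and negative for $x>0$, the even function $x\mapsto u(x,-d)$ increases on $(-\infty,0)$ and decreases on $(0,\infty)$, so it is maximal at the crest line and decreases to its far-field value $U(-d)$ as $x\to\pm\infty$. Thus $u(x,-d)\geq U(-d)$ along the whole bed, i.e.\ $\phi\geq 0$ there.

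Combining the three estimates, the maximum principle gives $u\geq U(-d)\geq 0$ throughout, and the strong maximum principle applied to the nonnegative supersolution $\phi$ — which is not identically zero, since $u>c-\sqrt{\lambda}$ wherever $\eta>0$ on the surface — upgrades this to $u>0$ at every point of $\bar\Omega_\eta$. I expect the bed estimate to be the main obstacle: it is the only boundary piece not controlled by Bernoulli or by the far-field limit, and it is dispatched precisely by importing the sign of $v_y(x,-d)$ from Lemma~\ref{lem vertical}, which is why that lemma is established first.
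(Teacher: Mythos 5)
Your argument is correct and shares the paper's overall architecture: control $u$ on the free surface, on the bed and in the far field, then propagate positivity into the interior via the maximum principle for the elliptic equation obtained by differentiating \reff{equ psi} in $y$. The one genuine difference is the surface estimate. The paper bounds $u$ on $y=\eta(x)$ by invoking Varvaruca's monotonicity result \cite{EV}, recorded as \reff{u mono s}, which for $\gamma\leq 0$ says the surface velocity increases toward the crest; combined with the far-field limit $u(x,\eta(x))\to U(0)$ this gives $u\geq U(0)$ on the surface, at the cost of a small case split ($\gamma\equiv 0$ versus $\gamma\not\equiv 0$) to ensure $U(0)>0$. You instead read the bound directly off the Bernoulli condition \reff{Bernouli}: since $\abs{\nabla\psi}^2=\psi_y^2(1+\eta'^2)$ on the surface and $\eta>0$, one gets $\psi_y^2<\lambda$, hence $u>c-\sqrt{\lambda}=U(0)\geq U(-d)\geq 0$, with no case split and no external citation --- a more self-contained route, though the paper's choice has the side benefit that \reff{u mono s} is reused later (e.g.\ to get $u(0,\eta(0))>0$ in the proof of Theorem~\ref{thm}). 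Your bed estimate, via Lemma~\ref{lem vertical}-\reff{lem vertical 3} and $u_x=-v_y$, is exactly the paper's \reff{u mono b}, and your interior step (maximum principle for the supersolution $\phi=u-U(-d)$) is the paper's step (which uses $\triangle u+\gamma'(\psi)u=\gamma'(\psi)c\leq 0$) shifted by a harmless constant. One point worth making explicit: when $c=\sqrt{\lambda+2\Gamma(p_0)}$ one has $U(-d)=0$, so on the bed you need the strict inequality $u(x,-d)>U(-d)$ at every finite $x$; this does follow from your monotonicity argument because the signs of $v_y(x,-d)$ in Lemma~\ref{lem vertical}-\reff{lem vertical 3} are strict.
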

\begin{proof}
  We assume throughout this proof that $\gamma(p)\not\equiv 0$ for $p\in [0,\abs{p_0}]$, since we already know that for $\gamma(p)\equiv 0$, i.e., the irrotational case, $u(x,y)>0$ in $\bar \Omega_\eta$; see \cite{CE}. Recall that $U(y)$ is the horizontal velocity of the trivial laminar flow. Thus $U'(y)=\Psi_{yy}(x,y)=-\gamma(\Psi(y))\geq0$, and there exists an interval $I$ such that the strict inequality holds for $y\in I$ since $\gamma(p)\not\equiv 0$ by assumption. This combined with \reff{U V} and the assumption $c\geq\sqrt{\lambda+2\Gamma(p_0)}$ gives
  \begin{eqnarray*}
    U(y)\geq U(-d)=c-\sqrt{\lambda+2\Gamma(p_0)}\geq 0~~{\rm for} ~~y\in[-d,0],~~~{\rm and }~~~~U(0)>0.
  \end{eqnarray*}

  It was proved in \cite{EV} that if $\gamma(p)\leq0$ for all $p\in [0,\abs{p_0}]$, then
  \begin{eqnarray}\label{u mono s}
    \frac{d}{dx}u(x,\eta(x))\geq 0~~~~{\rm for~~} x<0,~~~~{\rm and} ~~~~\frac{d}{dx}u(x,\eta(x))\leq 0~~{\rm for~~} x>0.
  \end{eqnarray}
  In other words, along the free surface $u$ increases from $x=-\infty$ to the crest $x=0$, and thereafter it is decreasing. On the bottom $B=\{(x,y)\in\R^2: y=-d\}$, we have, in view of $u_x=-v_y=\psi_{xy}$ and Lemma \ref{lem vertical}-\reff{lem vertical 3},
  \begin{eqnarray}\label{u mono b}
    u_x(x,-d)>0~~~~{\rm for~~} x<0,~~~~{\rm and} ~~~~ u_x(x,-d)<0~~~~{\rm for~~} x>0.
  \end{eqnarray}
  Then the fact that $U(-d)\geq 0$ and $U(0)>0$ together with the monotonicity properties \reff{u mono s} and \reff{u mono b} yields $u>0$ on the free surface and on the bottom. Differentiating \reff{equ psi} with respect to $y$ yields that
  $u$ satisfies
  \[
    \triangle u+\gamma'(\psi)u=\gamma'(\psi) c\leq 0.
  \]
  Finally considering $U(y)\geq 0$ for $y\in[-d,0]$ and the properties (P\ref{u to 0})-(P\ref{u to U}), we can deduce $u>0$ in $\bar\Omega_\eta$ by using the strong maximum principle as in the proof of Lemma \ref{lem vertical}-\reff{lem vertical 2}, completing the proof.
\end{proof}

The above lemma considers the case $c\geq\sqrt{\lambda+2\Gamma(p_0)}$, that is the underlying flow is favorable. For the remaining cases, we can only derive some conclusions on the horizontal velocity $u$ for some special vorticity function $\gamma$  and for $\lambda$  sufficiently close to the corresponding $\lambda_c$ defined in \reff{def lambda c}. To be exact, we prove
\begin{lemma}\label{lem horizontal small}
  Let $\lambda_c$ be determined in \reff{def lambda c} by the vorticity function $\gamma(p)$ with $p\in[0,\abs{p_0}]$.
  \begin{enumerate}[(a)]
    \item\label{ux} If $\gamma'(p),\gamma''(p)\leq0$ for $p\in[0,\abs{p_0}]$ and $\gamma(0)\sqrt{\lambda_c}>-g$, then there exists  $\eps_1>0$ such that for $\lambda\in(\lambda_c,\lambda_c+\eps_1)$, we have
      \begin{eqnarray*}
       u_{x}> 0~~ {\rm for~~} (x,y)\in\Omega_-, ~~~{\rm and~~} u_{x}<0~~ {\rm for~~} (x,y)\in\Omega_+;
      \end{eqnarray*}
    \item\label{uy} If $\gamma(p)<0$ for all $p\in[0,\abs{p_0}]$, then there exists  $\eps_2>0$ such that for $\lambda\in(\lambda_c,\lambda_c+\eps_2)$,
    we have
    \begin{eqnarray*}
       u_{y}> 0~~ {\rm for~~} (x,y)\in\Omega.
      \end{eqnarray*}
  \end{enumerate}
 \end{lemma}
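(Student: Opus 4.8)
\emph{Plan for (a).} I would work with $w:=u_x=\psi_{xy}=-v_y$ on the full right half $\Omega_\eta^+:=\{(x,y)\in\Omega_\eta:\ x>0\}$ and prove $w<0$ there; since $\psi$, and hence $u=\psi_y+c$, is even in $x$, the function $w$ is odd in $x$, so the conclusion $u_x>0$ on $\Omega_-$ follows by reflection and $w(0,y)=0$ automatically. Differentiating $\triangle\psi=-\gamma(\psi)$ once in $x$ and once in $y$ gives the elliptic equation
\[
\triangle w+\gamma'(\psi)\,w=\gamma''(\psi)\,v\,\psi_y .
\]
On $\Omega_\eta^+$ one has $\psi_y<0$ and $v>0$ (the maximum‑principle argument establishing Lemma \ref{lem vertical}(b) in fact gives $v>0$ in the whole half $\{x>0\}$, not merely in $\Omega_+$), so with $\gamma''\le0$ the right‑hand side is $\ge0$; together with $\gamma'\le0$ this makes $w$ a subsolution of an operator to which the weak and strong maximum principles apply.

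The signs of $w$ on $\partial\Omega_\eta^+$ are immediate except on the free surface: on the crest line $x=0$ oddness forces $w=0$; on the bottom $B_+$ Lemma \ref{lem vertical}(c) gives $w=-v_y<0$; and $w\to0$ as $x\to+\infty$ by the far‑field convergence. The crux is $S_+$. There I would combine the three surface relations $\psi_x=-\psi_y\eta'$ (from $\psi\equiv0$), $\psi_x^2+\psi_y^2+2g\eta=\lambda$ (Bernoulli \reff{Bernouli}), and $\psi_{xx}+\psi_{yy}=-\gamma(0)$ (the equation at $\psi=0$): differentiating Bernoulli tangentially and eliminating $\psi_{xx}$ yields the identity
\[
u_x=\frac{-\eta'\bigl[\,2u_y+\gamma(0)+g/(u-c)\,\bigr]}{1-(\eta')^2}\qquad\text{on }S_+ .
\]
Since on $S_+$ one has $\eta'<0$ and $(\eta')^2<1$, the sign of $u_x$ is that of the bracket. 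As $\lambda\downarrow\lambda_c$ the solution converges, uniformly in $x$ and with its derivatives, to the trivial flow, on whose surface $u-c=-\sqrt{\lambda}$ and $u_y=-\gamma(0)$, so the bracket tends uniformly to $-\gamma(0)-g/\sqrt{\lambda_c}$; this is strictly negative \emph{precisely} because of the hypothesis $\gamma(0)\sqrt{\lambda_c}>-g$. Hence there is $\eps_1>0$ such that for $\lambda\in(\lambda_c,\lambda_c+\eps_1)$ the bracket stays $<0$ on $S_+$, giving $w=u_x\le0$ there (with equality only at the crest).

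With $w\le0$ on all of $\partial\Omega_\eta^+$, the weak maximum principle (valid since $\gamma'\le0$), applied on truncations $\{0<x<k\}$ and passing to the limit $k\to\infty$ using $w\to0$, gives $w\le0$ on $\Omega_\eta^+$; the strong maximum principle upgrades this to $w<0$ in the interior, as $w\equiv0$ is incompatible with $w<0$ on $B_+$. This proves $u_x<0$ on $\Omega_+$ and, by oddness, $u_x>0$ on $\Omega_-$. I expect the surface identity together with the verification that its bracket is governed exactly by $\gamma(0)\sqrt{\lambda_c}>-g$ to be the main obstacle; the remainder is a standard maximum‑principle package, the only technical care being the uniform‑in‑$x$ convergence of $u$ and $u_y$ on $S_+$ to their trivial‑flow values, which I would read off from the solution representation of \cite{HurSolitary}.

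\emph{Plan for (b).} Here I would avoid the maximum principle and argue pointwise. Writing $u_y=\psi_{yy}$ and using $\triangle\psi=-\gamma(\psi)$ together with $\psi_{xx}=-v_x$ gives the exact identity
\[
u_y=-\gamma(\psi)+v_x .
\]
Under $\gamma<0$ on $[0,\abs{p_0}]$ one has $-\gamma(\psi)\ge c_0>0$ throughout $\bar\Omega_\eta$, where $c_0:=\min_{[0,\abs{p_0}]}(-\gamma)$. It therefore suffices to show that $v_x=-\psi_{xx}$ is uniformly small: from the smallness of the small‑amplitude solution (e.g.\ \reff{eta small} and the representation of $\psi$ in \cite{HurSolitary}, whose $x$‑derivatives carry positive powers of $\sqrt{\eps}$) one obtains $\sup_{\bar\Omega_\eta}\abs{v_x}\to0$ as $\lambda\downarrow\lambda_c$. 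Choosing $\eps_2>0$ so that this supremum is $<c_0$ for $\lambda\in(\lambda_c,\lambda_c+\eps_2)$ yields $u_y\ge c_0-\abs{v_x}>0$ on $\Omega$. The only real work is the uniform second‑derivative bound giving $v_x\to0$, which is a regularity and smallness statement to be extracted from the construction rather than a conceptual difficulty.
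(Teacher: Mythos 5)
Your part (a) is essentially the paper's argument: the same elliptic equation $(\triangle+\gamma')u_x=-\gamma''\,\psi_x\psi_y\ge 0$ in the right half-domain, the same boundary signs on the crest line, on the bottom (via Lemma \ref{lem vertical}) and at infinity, and the same mechanism on the free surface, where the hypothesis $\gamma(0)\sqrt{\lambda_c}>-g$ is what forces the sign. The only real difference there is which quantities you eliminate: you keep $u_y$ in the surface identity, so your bracket tends to $-\gamma(0)-g/\sqrt{\lambda_c}$ and you must additionally know that $u_y(x,\eta(x))\to-\gamma(0)$ uniformly as $\lambda\downarrow\lambda_c$ (true, and readable off \reff{expression u}), whereas the paper also eliminates $u_y$ via the twice-differentiated surface condition and obtains a bracket controlled purely by the explicit smallness \reff{eta small} of $\eta,\eta',\eta''$; your identity is algebraically cleaner, the paper's is self-contained given \reff{eta small}. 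Part (b) is where you genuinely diverge: the paper differentiates the explicit representation \reff{expression u} in $y$ and uses \reff{property w} to show that the factor $-\gamma(\psi)\inner{\lambda+2\Gamma(-\psi)}^{-3/2}+\eps w^\eps_{pp}$ stays positive, while you use the exact pointwise identity $u_y=-\gamma(\psi)+v_x$ from $\triangle\psi=-\gamma(\psi)$ and beat the uniformly positive term $-\gamma(\psi)\ge c_0$ by the uniform smallness of $v_x$. Your route makes it transparent that $\gamma<0$ is the operative hypothesis, but note that the bound $\sup_{\bar\Omega_\eta}\abs{v_x}\to 0$ is not among the properties the paper recalls from \cite{HurSolitary}; it does follow from the height-function representation there (in the long-wave scaling $\psi_x=O(\eps^{3/2})$ and $\psi_{xx}=O(\eps^{2})$), so this is an extra extraction step of comparable depth to the paper's use of $w^\eps_{pp}$, not a gap. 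Both plans are correct.
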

\begin{proof}
  $\reff{ux}$ Similar results have been obtained for periodic waves in \cite{Enonlinearity}, so we will adapt the proof there to our present solitary wave case.
  We prove only the result for  $(x,y)\in\Omega_+$ and the rest can be proved similarly. Note it suffices to prove that the stream function $\psi$ satisfies
  \begin{eqnarray*}
    \psi_{xy}<0~~ {\rm for~~} (x,y)\in\Omega_+.
  \end{eqnarray*}

  Since $\psi(x,\eta(x))=0$, we have $\psi_x=-\eta'\psi_y$ on the surface, which combined with \reff{Bernouli} gives
  \[
    \psi_y(x,\eta(x))=-\sqrt{(\lambda-2g\eta)/(1+\eta'^2)}.
  \]
  Differentiating the above equality and $\psi_x=-\eta'\psi_y$ along the surface gives respectively
  \begin{eqnarray*}
    \psi_{xy}+\eta'\psi_{yy}=-\partial_x\sqrt{\frac{\lambda-2g\eta}{1+\eta'^2}}\quad {\rm and}\quad\psi_{xx}+\eta'\psi_{yy}+2\eta'\psi_{xy}+\eta''\psi_y=0,~~~{\rm on ~~}y=\eta(x).
  \end{eqnarray*}
  Moreover on the surface one has
  \[
    \psi_{xx}+\psi_{yy}=-\gamma(0).
  \]
  Combination of the above three equalities gives
  \[
    \psi_{xy}(x,\eta(x))=\frac {\eta'\inner{g(1-\eta'^4)+2\eta''(\lambda-2g\eta)+\gamma(0)\sqrt{\lambda-2g\eta}(1+\eta'^2)^{3/2}}}
    {\sqrt{\lambda-2g\eta}(1+\eta'^2)^{5/2}}.
  \]
  In view of \reff{eta small} and $\gamma(0)\sqrt{\lambda_c}>-g$, we have when $\lambda$ is sufficiently close to $\lambda_c$ that
  \[
     g(1-\eta'^4)+2\eta''(\lambda-2g\eta)+\gamma(0)\sqrt{\lambda-2g\eta}(1+\eta'^2)^{3/2}>0.
  \]
   This gives $\psi_{xy}<0$ on the surface since $\eta'<0$ for $x>0$ and $\sqrt{\lambda-2g\eta}(1+\eta'^2)^{5/2}>0$. Since $v(0,y)=0$, we have $\psi_{xy}=-v_y=0$ on the line $x=0$. On the bottom $\psi_{xy}<0$ holds due to Lemma \ref{lem vertical}-\reff{lem vertical 3}. And $\psi_{xy}=u_x\rightarrow 0$ as $x\rightarrow\infty$ since $u(x,y)\rightarrow U(y)$ as $x\rightarrow\infty$. Finally $\psi_{xy}$ satisfies
  \begin{eqnarray*}
    (\triangle+\gamma')\psi_{xy}=-\gamma''\psi_x\psi_y\geq 0.
  \end{eqnarray*}
  The conclusion follows from the maximum principle.

  $\reff{uy}$ Recall from \cite{HurSolitary} that for $\lambda=\lambda_c+\eps$, there exists a function $w^\eps(q,p)$ such that $u(x,y)$ is determined by
  \begin{eqnarray}\label{expression u}
  u(x,y)=c-\frac{1}{(\lambda+2\Gamma(-\psi(x,y)))^{-1/2}+\eps w^{\eps}_p(\sqrt{\eps}x,-\psi(x,y))},
\end{eqnarray}
 where $w^{\eps}_p(q,p)$ stands for the differentiation of the function $w^{\eps}(q,p)$ with respect to $p$, and the family $\{w^{\eps}(q,p); 0\leq \eps<1\}$ satisfies that  for $\eps$ small
 \begin{eqnarray}\label{property w}
   \sup\Bigl\{\abs{\partial^j_q\partial^k_p w^{\eps}(q,p)};j+k\leq2, (q,p)
   \in\R\times[p_0,0] \Bigr\}\leq r,
 \end{eqnarray}
 with $r>0$ some constant independent of $\eps$. Differentiating \reff{expression u} with respect to $y$ gives
 \begin{eqnarray*}
   u_y=\frac{-\psi_y\inner{-\gamma(\psi)\inner{\lambda+2\Gamma(-\psi)}^{-3/2}+\eps w^\eps_{pp}(\sqrt{\eps}x,-\psi)}}
   {\inner{\inner{\lambda+2\Gamma(-\psi)}^{-1/2}+\eps w^\eps_p(\sqrt{\eps}x,-\psi)}^2}.
 \end{eqnarray*}
 Since $\gamma(p)<0$, if we denote $\gamma_{\max}=\max_{p\in[0,\abs{p_0}]}\gamma(p)$, then $\gamma_{\max}<0$ and $\Gamma_{\max}=\max_{p\in[p_0,0]}\Gamma(p)>0$. As a result, observing
 \begin{eqnarray*}
   -\gamma(\psi)\inner{\lambda+2\Gamma(-\psi)}^{-3/2}+\eps w^\eps_{pp}(\sqrt{\eps}x,-\psi)\geq-\gamma_{\max}\inner{\lambda+2\Gamma_{\max}}^{-3/2}+\eps w^\eps_{pp}(\sqrt{\eps}x,-\psi)
 \end{eqnarray*}
 and
 \begin{eqnarray*}
   \lim_{\eps\rightarrow 0}\inner{-\gamma_{\max}\inner{\lambda+2\Gamma_{\max}}^{-3/2}+\eps w^\eps_{pp}(\sqrt{\eps}x,-\psi)}
   =-\gamma_{\max}\inner{\lambda_c+2\Gamma_{\max}}^{-3/2}
 \end{eqnarray*}
 due to \reff{property w},
 we get, in view of $-\gamma_{\max}\inner{\lambda_c+2\Gamma_{\max}}^{-3/2}>0$, that
 \begin{eqnarray*}
   -\gamma(\psi)\inner{\lambda+2\Gamma(-\psi)}^{-3/2}+\eps w^\eps_{pp}(\sqrt{\eps}x,-\psi)>0
 \end{eqnarray*}
 when $\eps$ is sufficiently small. This combined with $-\psi_y>0$ gives $u_y>0$ for $\eps$ sufficiently small.
\end{proof}

\begin{lemma}\label{u mono streamline}
  If $\gamma(p)<0,\gamma'(p),\gamma''(p)\leq0$ for $p\in[0,\abs{p_0}]$ and $\gamma(0)\sqrt{\lambda_c}>-g$, then  for $\lambda\in(\lambda_c,\lambda_c+\eps_0)$ with $\eps_0=\min\{\eps_1,\eps_2\}$, along every streamline $y=\sigma_p(x)$ with $p\in(p_0,0)$, the horizontal velocity $u$ is strictly decreasing in $\Omega_+$ and strictly increasing in $\Omega_-$ as a function of $x$.
\end{lemma}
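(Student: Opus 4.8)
The plan is to differentiate $u$ along the streamline and read off the sign of the result from the three sign statements already established. Fix $p\in(p_0,0)$ and set $U_p(x):=u(x,\sigma_p(x))$. By the chain rule together with \reff{sigma},
\[
  U_p'(x)=u_x(x,\sigma_p(x))+u_y(x,\sigma_p(x))\,\sigma_p'(x),
\]
so the whole question reduces to controlling the three factors $u_x$, $u_y$ and $\sigma_p'$ along the curve $y=\sigma_p(x)$. First I would verify that the present hypotheses feed all the ingredient results: $\gamma'(p),\gamma''(p)\le 0$ together with $\gamma(0)\sqrt{\lambda_c}>-g$ are precisely the hypotheses of Lemma \ref{lem horizontal small}-\reff{ux}, while $\gamma(p)<0$ is the hypothesis of Lemma \ref{lem horizontal small}-\reff{uy}, and $\gamma'(p)\le 0$ supplies the hypothesis of Lemma \ref{lem vertical}, hence of Corollary \ref{cor streamline}. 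Since $\eps_0=\min\{\eps_1,\eps_2\}$, the range $\lambda\in(\lambda_c,\lambda_c+\eps_0)$ lies simultaneously in the ranges required by both parts of Lemma \ref{lem horizontal small}.

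Next comes the sign count. Since $0<-p<-p_0$, the streamline lies strictly between the bed and the free surface, $-d<\sigma_p(x)<\eta(x)$ for all $x$, so the interior sign results of Lemma \ref{lem horizontal small} apply along it, with the signs appropriate to $\Omega_+$ when $x>0$ and to $\Omega_-$ when $x<0$. For $x>0$: Lemma \ref{lem horizontal small}-\reff{ux} gives $u_x<0$, Lemma \ref{lem horizontal small}-\reff{uy} gives $u_y>0$, and Corollary \ref{cor streamline} gives $\sigma_p'(x)<0$; hence both summands of $U_p'(x)$ are strictly negative and $U_p'(x)<0$, so $U_p$ is strictly decreasing in $\Omega_+$. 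For $x<0$ the signs of $u_x$ and $\sigma_p'$ both flip (while $u_y>0$ throughout), so both summands are strictly positive and $U_p'(x)>0$, giving strict increase in $\Omega_-$.

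There is essentially no remaining obstacle: the genuine work was already invested in Lemma \ref{lem horizontal small} and Corollary \ref{cor streamline}, and this statement is a clean consequence of the chain rule together with sign-matching. The only point meriting a word of care is the behaviour on the crest line $x=0$, where $\sigma_p'(0)=0$ by the symmetry in \reff{symmetry} and $u_x(0,\cdot)=\psi_{xy}(0,\cdot)=-v_y(0,\cdot)=0$ in view of $v(0,y)=0$ from Lemma \ref{lem vertical}-\reff{v=0}; thus $U_p'(0)=0$, so $x=0$ is the maximum of $u$ along each streamline and the asserted strict monotonicity holds on each open half-line $x>0$ and $x<0$ separately rather than across the crest.
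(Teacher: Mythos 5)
Your proposal is correct and follows essentially the same route as the paper: differentiate $u$ along the streamline via the chain rule, $\frac{d}{dx}u(x,\sigma_p(x))=u_x+u_y\sigma_p'(x)$, and read off the signs of $u_x$, $u_y$ and $\sigma_p'$ from Lemma \ref{lem horizontal small} and Lemma \ref{lem vertical} (the paper writes $\sigma_p'=v/(u-c)$ directly where you invoke Corollary \ref{cor streamline}, which is the same fact). Your added remarks on hypothesis-matching and on the vanishing of $U_p'$ at the crest line are accurate but not needed beyond what the paper records.
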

\begin{proof}
   Since
   \begin{eqnarray*}
     \frac{d }{dx}u(x,\sigma_p(x))=u_x+u_y\sigma_p'(x)=u_x+u_y\frac{v}{u-c}
   \end{eqnarray*}
   due to \reff{sigma}, the conclusion follows immediately from Lemma \ref{lem vertical} and Lemma \ref{lem horizontal small}.
\end{proof}

\section{Main result}\label{sec r}
Recalling that $y=l(y_0)$ is the streamline asymptote introduced at the end of Section \ref{sec p}, we are now ready to state our main result describing the particle
trajectories in a solitary wave with negative vorticity.

\begin{theorem}\label{thm}
  Let the flux $p_0<0$, the vorticity function $\gamma\in C^2(0,\abs{p_0})$
 be given, and let $\lambda_c$ be determined as in \reff{def lambda c}. Assume that $\gamma(p)<0,\gamma'(p)\leq0$ for all $p\in [0,\abs{p_0}]$, and that $\lambda>\lambda_c$ such that nontrivial solitary wave solutions to \reff{psi y negative}-\reff{psi x to 0} exist.  Then the following results hold.
  \begin{enumerate}[(a)]
  \item\label{under crest} Any particle above the bed reaches at some instant $t_0$ the location $(X_0,Y_0)$ below the wave crest $(X_0,\eta(0))$;
    \item\label{Tra positive} For $c\geq\sqrt{\lambda+2\Gamma(p_0)}$, as time $t$ runs on $(-\infty,t_0)$, the particle above the flat bed moves to the right and upwards, while for $t>t_0$ the particle moves to the right and downwards, as in Figure \ref{fig1}-(a);  the particle on the flat bed moves in a straight line to the right at a positive speed;
    \item\label{Tra nonpositive} If assume additionally that $\gamma(0)\sqrt{\lambda_c}>-g$ and $\gamma''(p)\leq 0$ for all $p\in [0,\abs{p_0}]$, then there exists $\eps_0>0$ such that for $\lambda\in(\lambda_c,\lambda_c+\eps_0)$ and
    \begin{enumerate}[(i)]
      \item\label{Tra mixed} for $\sqrt{\lambda}<c<\sqrt{\lambda+2\Gamma(p_0)}$, there does not exist a single pattern for all the  particles above the flat bed; if $u(x,-d)\geq 0$, depending on the relation between the asymptote $y=l(Y_0)$ and the zero point $y_*$ of $U(y)$, some particles move to the right along a path with a single hump as described in \reff{Tra positive}, some move along a single-loop path to the left, as in Figure \ref{fig1}-(b); if $u(x,-d)<0$, there are additionally some particles moving to the left along a single-hump path; see Figure \ref{fig1}-(c);
      \item\label{Tra negative} for $c\leq\sqrt{\lambda}$, depending on the signs of $u(0,\eta(0))$ and $u(0,-d)$, there are three possibilities for the particles above the flat bed; see Figure \ref{fig3};
      \item\label{Tra bottom} for a particle on the flat bed in both the cases (i) and (ii), if $u(x,-d)\leq0$ it moves to the left in a straight line, while if $u(x,-d)>0$  it firstly has a backward-forward pattern of motion and then moves to the left in a straight line;
    \end{enumerate}
    \item\label{Tra infinity} The particle trajectory is strictly above the asymptote $Y=l(Y_0)$ of the streamline $Y=\sigma_p(X-ct)$ with $p=-\psi(0,Y_0)$.
  \end{enumerate}
  \end{theorem}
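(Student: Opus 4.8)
The plan is to pass to the wave frame and reduce the entire statement to reading off the signs of $u$ and $v$ along a single streamline. In the variable $x=X-ct$ the flow is steady, so a particle trajectory $(X(t),Y(t))$ obeys $\dot x=\dot X-c=u-c=\psi_y$ and $\dot Y=v=-\psi_x$ by \reff{def psi}; hence $\frac{d}{dt}\psi(x(t),Y(t))=\psi_x\psi_y+\psi_y(-\psi_x)=0$, so the particle rides the streamline $Y=\sigma_p(x)$ with $p=-\psi(0,Y_0)$, and $\dot X=u$. Because $\psi_y\le-\delta<0$ by \reff{psi y negative}, the frame coordinate $x(t)$ is strictly decreasing and sweeps all of $\R$; in particular there is a unique $t_0$ with $x(t_0)=0$, at which the particle sits at $(X_0,Y_0)=(ct_0,\sigma_p(0))$ directly below the crest, giving \reff{under crest}. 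From here I parametrize the path by $x$ (a decreasing function of $t$) and use $\dot X=u$, $\dot Y=v$ with $dt/dx=1/\psi_y<0$.

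The vertical behaviour comes from Lemma \ref{lem vertical}: $v>0$ for $x>0$, $v<0$ for $x<0$, and $v=0$ on $x=0$ and on the bed; I would first observe that the maximum-principle argument there runs on the full-depth boxes $\Omega_{+,k}$, so this sign pattern (and the later monotonicity lemmas) holds throughout $\{\pm x>0,\ -d<y<\eta\}$, not merely above $y=0$. Since $x$ decreases with $t$, the particle rises for $t<t_0$ and descends for $t>t_0$. When $c\ge\sqrt{\lambda+2\Gamma(p_0)}$, Lemma \ref{lem horizontal} gives $u>0$ on $\bar\Omega_\eta$, so $\dot X=u>0$ throughout and the motion is a rightward single hump; a bed particle has $v=0$, so it stays on $y=-d$ and advances at positive speed $u(x,-d)>0$, which is \reff{Tra positive}. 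Part \reff{Tra infinity} is then immediate: by Corollary \ref{cor streamline}, $\sigma_p$ increases strictly for $x<0$ and decreases strictly for $x>0$ toward its asymptote $l(Y_0)$, so $\sigma_p(x)>l(Y_0)$ for every finite $x$, and as $Y(t)=\sigma_p(x(t))$ the trajectory stays strictly above $Y=l(Y_0)$.

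In the non-favorable cases everything is encoded in the profile of $u$ along the streamline. By Lemma \ref{u mono streamline}, $x\mapsto u(x,\sigma_p(x))$ increases for $x<0$ and decreases for $x>0$; by property (P\ref{u to U}) together with $\sigma_p(x)\to l(Y_0)$ it tends to $U(l(Y_0))$ as $|x|\to\infty$, and it is even in $x$ since $\psi$ is even. Thus $u$ has minimum $U(l(Y_0))$ (in the far field) and maximum $u(0,Y_0)$ (at the crest), and three regimes arise: $u>0$ throughout when $U(l(Y_0))\ge0$ (rightward hump); $u$ vanishing at exactly one symmetric pair $\pm x_1$ and being negative, then positive, then negative when $U(l(Y_0))<0<u(0,Y_0)$; and $u<0$ throughout when $u(0,Y_0)\le0$ (leftward hump). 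To see the middle regime is a single loop, write the path as $\bigl(X_0+G(x),\,\sigma_p(x)\bigr)$ with $G(x)=\int_0^x \frac{u}{u-c}\,ds$; as $u/(u-c)$ is even, $G$ is odd, the curve is symmetric about $X=X_0$, and distinct parameters $x_a\ne x_b$ meet only if $\sigma_p(x_a)=\sigma_p(x_b)$ (forcing $x_b=-x_a$ by Corollary \ref{cor streamline}) and $G(x_a)=0$. Since $G$ strictly decreases on $(0,x_1)$ and strictly increases on $(x_1,\infty)$ from a negative minimum to $+\infty$, it has a single positive zero $x_2$, so the path self-intersects exactly once, at $(X_0,\sigma_p(x_2))$. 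The same bookkeeping sorts the particles: $U$ is strictly increasing ($U'=-\gamma>0$) with one zero $y_*$ in the mixed range $\sqrt\lambda<c<\sqrt{\lambda+2\Gamma(p_0)}$, so the sign of $U(l(Y_0))$ is fixed by whether $l(Y_0)\lessgtr y_*$, while $u(0,\cdot)$ is increasing in $y$ by Lemma \ref{lem horizontal small}\reff{uy}, so the sign of the crest value is monotone in $Y_0$. This produces the trichotomy in \reff{Tra mixed}, the three sub-cases of \reff{Tra negative} when $c\le\sqrt\lambda$ (where $U\le0$ forces every far-field value $\le0$), and the bed statement \reff{Tra bottom} upon restricting the argument to $y=-d$ via the sign of $u_x(x,-d)$ in \reff{u mono b}.

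The main obstacle is precisely the loop in \reff{Tra mixed} and \reff{Tra negative}: upgrading ``the horizontal velocity reverses twice'' to ``the planar curve has exactly one self-intersection.'' The odd/even reduction above makes the topological count clean, but it leans on three facts that need care: the evenness of $u(x,\sigma_p(x))$ and of $u/(u-c)$, the strict one-sidedness of $\sigma_p$ from Corollary \ref{cor streamline} (so equal heights force $x_b=-x_a$), and the uniqueness of the positive zero of $G$, which rests on the single sign change of $u$ supplied by Lemma \ref{u mono streamline}. I expect the real work to sit in these monotonicity and limit inputs for $\lambda$ near $\lambda_c$---in particular in confirming, via the uniform convergence in (P\ref{u to U}), that $U(l(Y_0))$ is genuinely the infimum of $u$ along the streamline---rather than in the self-intersection count itself.
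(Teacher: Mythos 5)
Your proposal is correct and follows the same overall architecture as the paper: the passage to the wave frame, the Hamiltonian structure of \reff{equ x y}, the strict decrease of $x(t)$ giving the unique $t_0$ for \reff{under crest}, Lemmas \ref{lem vertical} and \ref{lem horizontal} for \reff{Tra positive}, and Corollary \ref{cor streamline} plus the far-field convergence for \reff{Tra infinity} all match the paper's proof. Your parenthetical remark that the maximum-principle arguments really operate on the full-depth boxes (so the sign pattern of $v$ holds on $-d<y<\eta$, not just $0<y<\eta$ as the literal definition of $\Omega_\pm$ suggests) is a fair and needed reading. Where you diverge is in part \reff{Tra nonpositive}: the paper locates the zero set $\{u=0\}$ as a planar curve ${\mathcal C}_\pm$ parameterized as $x=h(y)$ between $y=-d$ (or $y=\tilde y$) and $y=y_*$, and then intersects streamlines with it, whereas you restrict $u$ to a single streamline and read the trichotomy off the even, unimodal profile $x\mapsto u(x,\sigma_p(x))$ with maximum $u(0,Y_0)$ and infimum $U(l(Y_0))$. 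The two are essentially equivalent (both rest on Lemma \ref{u mono streamline} and the monotonicity of $u(0,\cdot)$ and of $U$), but your version buys something the paper only asserts: by writing the path as $(X_0+G(x),\sigma_p(x))$ with $G(x)=\int_0^x u/(u-c)\,ds$ odd, and combining the strict monotonicity of $\sigma_p$ with the single sign change of $G'$, you actually prove that the trajectory has exactly one self-intersection, i.e.\ that "single loop" is literally correct; the paper's proof stops at the sign changes of $u$ and $v$ and leaves the self-intersection count to the figures. No gaps.
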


\begin{proof}
  The path (past and future) $(X(t),Y(t))$ of a particle with location $(X(0),Y(0))$ at time $t=0$ is given by the solution of the non-autonomous system
  \begin{eqnarray*}
  \left\{
    \begin{array}{ll}
      \dot X=u(X-ct,Y),\\
      \dot Y=v(X-ct,Y).
    \end{array}
    \right.
  \end{eqnarray*}
  Working in the moving frame $x=X-ct$ and $y=Y$, we transform the above system into 
  \begin{eqnarray}\label{equ x y}
    \left\{
    \begin{array}{ll}
      \dot x=u(x,y)-c,\\
      \dot y=v(x,y).
    \end{array}
    \right.
  \end{eqnarray}
  This is a  Hamiltonian system with Hamiltonian $\psi(x,y)$ in view of \reff{def psi}, meaning that the solutions $(x(t),y(t))$ of \reff{equ x y} lie on the streamlines. All solutions of \reff{equ x y} are defined globally in time since the boundedness of the right-hand side prevents blow-up in finite time.

  \reff{under crest} Since in the moving frame the wave crest is assumed to be located at $x=0$ in our setting, the sign of $x(t)$ describes the position of the particle with respect to the wave crest at time $t$: the particle is exactly below the crest if $x(t)=0$, is ahead of the crest if $x(t)>0$ while is behind the crest if $x(t)<0$.  Note that $\dot x=u(x,y)-c\leq -\delta< 0$ throughout $\bar\Omega_{\eta}$. This uniform upper bound on $\dot x$ implies that $x(t)\rightarrow \mp\infty$ as $t\rightarrow \pm\infty$ and there is a unique time $t_0$ such that $x(t_0)=0$. That is, to each fluid particle moving within the water there corresponds a unique time $t_0\in\R$ so that at $t=t_0$ the particle is exactly below the wave crest, while afterwards it is located behind the wave crest, the wave crest being behind the particle for $t<t_0$.

  In the subsequent of the proof, we always assume that the particle is located below the crest at time $t=t_0$ at the location $(X_0,Y_0)=(X(t_0),Y(t_0))$, or equivalently $(x(t_0),y(t_0))=(0,Y_0)$ which implies in fact $X_0=ct_0$.

    \reff{Tra positive}
     Assume $c\geq\sqrt{\lambda+2\Gamma(p_0)}$.  For a particle located above the flat bed, we prove  only  the statement for $t>t_0$ and the case $t<t_0$ can be proved similarly. Since $x(t)$ is strictly decreasing, one has $X(t)-ct=x(t)<0$ for $t>t_0$, which combined with Lemma \ref{lem vertical}-\reff{lem vertical 2} implies $\dot Y<0$ for $t>t_0$. Furthermore we have $\dot X>0$ for all time by Lemma \ref{lem horizontal}. That means the particle moves to the right and downwards as time runs on $(t_0,+\infty)$. The statement for particles on the bed follows from Lemma \ref{lem vertical}-\reff{v=0}
     and Lemma \ref{lem horizontal}. The particle path for this case is depicted in Figure \ref{fig1}-(a).

 \begin{figure}
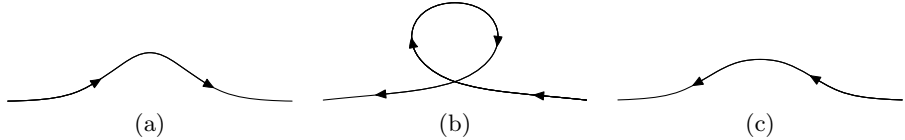

\begin{center}
 \includegraphics{picture.1}\quad
 \includegraphics{picture.2}\quad
\includegraphics{picture.3}
\end{center}
\caption{\small {Particle path with a single: (a) hump to the right; (b) loop to the left; (c) hump to the left.}}\label{fig1}
\end{figure}

\begin{figure}
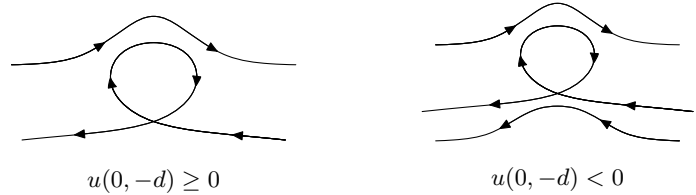

\begin{center}
  \includegraphics{picture.4}\hspace{1.5cm}
   \includegraphics{picture.5}
\end{center}
\caption{\small {Particle path above the flat bed in a small solitary wave with a mixed underlying current.}}\label{fig2}
\end{figure}

     \reff{Tra nonpositive}-(i)  Since
     $\sqrt{\lambda}<c<\sqrt{\lambda+2\Gamma(p_0)}$, we have $U(0)>0$
     and $U(-d)<0$, thus $U(y)$ has  a unique zero point, say at
     $y_*$.
Assume first that $u(0,-d)\geq 0$. The monotonicity properties of $u$ provided by Lemma \ref{lem horizontal small}-\reff{ux} and \reff{u mono b} show that in $\bar\Omega_+$ the level set $\{u=0\}$ consists of a continuous curve ${\mathcal C}_+$ in the moving frame. The curve  is confined between $y=-d$ and $y=y_*$, and can be parameterized by $x=h(y)$ with $h'(y)>0$, $h(y)\rightarrow +\infty$ as $y\rightarrow y_*$, and $h(-d)\geq 0$ with the equality holding when $u(x,-d)=0$. In $\bar\Omega_-$ the level set $\{u=0\}$ is given by the reflection ${\mathcal C}_-$ of the curve ${\mathcal C}_+$ across the line $x=0$. Below the curve ${\mathcal C}_+$ and ${\mathcal C}_-$ we have $u<0$ (including the bottom), while above and between the two curves we have $u>0$. Furthermore, in view of Lemma \ref{u mono streamline}, if a streamline and the curve ${\mathcal C}_+$ (rep. ${\mathcal C}_-$) intersect, they intersect exactly once.

     Recall that $(x(t),y(t))$ lies on the  streamline $y=\sigma_p(x)$
     with $p=-\psi(0,Y_0)$ since $(x(t_0),y(t_0))=(0,Y_0)$, and recall
     that $y=l(Y_0)$ is the asymptote of the streamline passing through the point $(0,Y_0)$.
 In virtue of Corollary \ref{cor streamline}, the path $(x(t),y(t))$ is located below $y=Y_0$ and above the asymptote $y=l(Y_0)$ for all time $t$.  If $l(Y_0)\geq y_*$, then we have from the above arguments that $u>0$ for all time $t$, while $v<0$ if $t>t_0$ and $v>0$ if $t<t_0$. This is the same situation as described in \reff{Tra positive}, so that the particle moves to the right along a path with a single hump, as depicted in Figure \ref{fig1}-(a).

     If $l(Y_0)<y_*$, the particle trajectories above the flat bed are as shown in Figure \ref{fig1}-(b). Indeed, in this case, as time $t$ increases from $-\infty$, the path $(x(t),y(t))$ intersects successively the curve ${\mathcal C}_+$ from below at $t=t_+$, the vertical line $x=0$ from right at $t=t_0$, and the curve ${\mathcal C}_-$ from above at $t=t_-$.  In the time interval $t\in(-\infty,t_+)\cup(t_-,+\infty)$ we know that $u<0$ so that in the physical variables $(X,Y)$ the particle $(X(t),Y(t))$ moves to the left. In the time interval $t\in(t_+,t_-)$ we have $u>0$ so that $(X(t),Y(t))$ moves to the right. Also we have $v>0$ when $t<t_0$ so that $(X(t),Y(t))$ moves up, while when $t>t_0$ we have $v<0$ so that $(X(t),Y(t))$ moves down. Thus in this case the particle above the flat bed moves to the left along a path with a single loop.

    It remains to treat the case $u(0,-d)<0$. Note $u(0,\eta(0))>0$ in virtue of $U(0)>0$ and \reff{u mono s}. Thus, in view of Lemma
    \ref{lem horizontal small}-\reff{uy}, there exists a unique
    $\tilde y\in(-d, \eta(0))$ such that $u(0,\tilde y)=0$. Observe
    $\tilde y<y_*$ since $u_x<0$ in $\Omega_+$ and $u(x,y)\rightarrow U(y)$ as $x\rightarrow \infty$. The corresponding
    curve ${\mathcal C}_+$ is now located between $y=y_*$ and
    $y=\tilde y$, intersecting the line $x=0$ at $y=\tilde y$. For
    $Y_0>\tilde y$, the paths $(X(t),Y(t))$ are similar as encountered
    in the case when $u(0,-d)\geq 0$. While for $Y_0\leq \tilde y$, we
    have $(x(t),y(t))$ is below ${\mathcal C}_+$ and ${\mathcal C}_-$,
    so that $u<0$ for all the time. As a result, the particles move to
    the left along a single-hump path; see Figure \ref{fig1}-(c).

    We depict all the possible trajectories in this case in Figure \ref{fig2}.

\reff{Tra nonpositive}-(ii) This case can be treated similarly as that in the
above, so we omit the details and show the particle trajectories in
Figure \ref{fig3}.

\begin{figure}
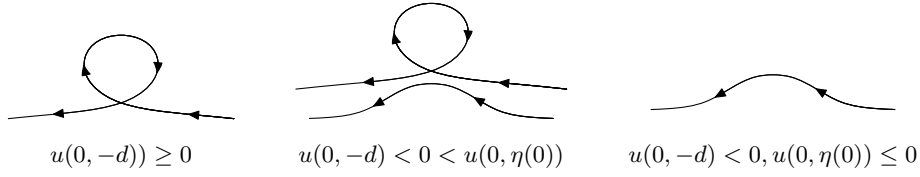

\begin{center}
  \includegraphics{picture.6}\quad\quad
 \includegraphics{picture.7}\quad \quad
  \includegraphics{picture.8}
\end{center}
\caption{\small {Particle path above the flat bed in a small solitary wave with an adverse underlying current.}}\label{fig3}
\end{figure}

 \reff{Tra nonpositive}-(iii) The conclusions for particles on the flat bed
 follow from Lemma \ref{lem vertical}-\reff{v=0} and \reff{u mono b}.

  \reff{Tra infinity} Observing $\dot Y=v(X-ct,Y)$, $X(t)-ct=x(t)\rightarrow -\infty$ as $t\rightarrow+\infty$ and $v(x,y)$ converges to $0$ as $\abs{x}\rightarrow \infty$ uniformly in $y$, we have the existence of some $\alpha$ such that $\lim_{t\rightarrow+\infty}Y(t)=\alpha$. Since $\psi(x,y)$ is the Hamiltonian function of the system \reff{equ x y}, we have
  \[
    \psi(x(t),y(t))=\psi(x(t_0),y(t_0))=\psi(0,Y_0).
  \]
  Thus $y(t)=\sigma_p(x(t))$ with $p=-\psi(0,Y_0)$. Consequently
  \[
    \lim_{t\rightarrow +\infty}Y(t)=\lim_{t\rightarrow +\infty}y(t)=\lim_{t\rightarrow +\infty}\sigma_p(x(t))=l(Y_0).
  \]
  The proof is completed.
\end{proof}

\noindent{\bf Remark}
  For the path with a single loop, if we define the size of the loop by the net horizontal distance moved by the particle between the two instants $t_+$ and $t_-$ when its horizontal velocity changes sign, that is,
  \begin{eqnarray*}
    X(t_-)-X(t_+),
  \end{eqnarray*}
  then the size decreases with depth. In fact  it can be computed by
  \begin{eqnarray*}
    X(t_-)-X(t_+)=\int_{t_+}^{t_-}\frac{d X}{dt}dt=\int_{t_+}^{t_-}u\inner{x,\sigma_p(x)}dt.
  \end{eqnarray*}
  Since $u_y>0$ and $\frac{d \sigma_p(x)}{dp}=-\frac{1}{\psi_y}>0$, we have $X(t_-)-X(t_+)$ decreases as $p$ decreases.

\section{Examples}\label{sec e} In this final section we give two examples of the vorticity function which satisfy the conditions imposed in our main theorem.
\subsection{Negative constant vorticity} In the case of negative constant vorticity $\gamma(p)=-\omega_0$ for $p\in[0,\abs{p_0}]$ with $\omega_0>0$, we have obviously $\gamma'(p),\gamma''(p)\leq 0$. So it remains to verify $\gamma(0)\sqrt{\lambda_c}>-g$, or equivalently, $\lambda_c<g^2/\omega_0^2$. Recall from \reff{def lambda c} that
$\lambda_c>-2\Gamma_{\min}$ is such that
\begin{eqnarray*}
  \int_{p_0}^0\frac{dp}{\inner{\lambda_c+2\Gamma(p)}^{3/2}}=\frac{1}{g}
\end{eqnarray*}
holds, where $\Gamma(p)=-\omega_0 p$ and $\Gamma_{\min}=0$ in this case by \reff{def Gamma}. Set
\[
  F(\lambda)=\int_{p_0}^0\frac{dp}{{\inner{\lambda-2\omega_0 p}}^{3/2}}.
\]
 Then direct computation shows that $F'(\lambda)<0$, $F(\lambda)\rightarrow +\infty$ as $\lambda\rightarrow 0+$, and
 \begin{eqnarray*}
   \lim_{\lambda\rightarrow g^2/\omega_0^2}F(\lambda)=\frac{1}{g}-\frac{1} {\sqrt{g^2-2\omega_0^3p_0}}<\frac{1}{g}.
 \end{eqnarray*}
 Thus there exists a unique $\lambda_c\in(0,g^2/\omega_0^2)$ such that $F(\lambda_c)=1/g$.

 \subsection{Negative affine linear vorticity} For the vorticity $\gamma(p)=-a p+b$ with $a>0$ and $b<0$,
 we have obviously $\gamma(p)<0$, $\gamma'(p),\gamma''(p)\leq 0$ for $p\in[0,\abs{p_0}]$ and
 \[
  F(\lambda)=\int_{p_0}^0\frac{dp}{{\inner{\lambda+2\Gamma(p)}}^{3/2}}=\int_{p_0}^0\frac{dp}{{\inner{\lambda+ap^2+2b p}}^{3/2}},
\]
with $\lambda>-2\Gamma_{\min}=0$. Then we may verify as above that $F(\lambda)$ is strictly decreasing with $\lim_{\lambda\rightarrow 0}F(\lambda)=+\infty$ and $\lim_{\lambda\rightarrow g^2/b^2}F(\lambda)<0$. This gives the existence of $\lambda_c\in(0,g^2/b^2)$ such that $F(\lambda_c)=1/g$ and consequently $\gamma(0)\sqrt{\lambda_c}>-g$.

\subsection*{Acknowledgements}  This work was supported  in part by
 Foundation of WUST(2010xz019) and by NSF grants of China 10901126.

\end{document}